\begin{document}

\title
{Regular functions on covers of nilpotent coadjoint orbits}

 \author{Dan Barbasch}
        \address[D. Barbasch]{Dept. of Mathematics\\
                Cornell University\\Ithaca, NY 14850}
        \email{barbasch@@math.cornell.edu}

\date{\today}

\maketitle




%

\newcommand\vh{\vspace{0.2in}}

\newcommand\al{\alpha}
\newcommand\chal{\check\alpha}

\newcommand{\dia}[8]{{#1-#3-&#4&-#5-#6-#7-#8\\ &|& \\ &#2&}}

\newcommand\C[1]{{\mathcal #1 }}
\newcommand\ovl[1]{{\overline #1}}
\newcommand\fk[1]{{\mathfrak #1}}

\newcommand\CH{{\mathcal H}}
\newcommand\CI{{\mathcal I}}
\newcommand\CK{{\mathcal K}}
\newcommand\CL{{\mathcal L}}
\newcommand\CN{{\mathcal N}}
\newcommand\CO{{\mathcal O}}

\newcommand\CP{{\mathcal P}}
\newcommand\CR{{\mathcal R}}
\newcommand\CS{{\mathcal S}}
\newcommand\CU{{\mathcal U}}
\newcommand\CV{{\mathcal V}}
\newcommand\CW{{\mathcal W}}
\newcommand\CX{{\mathcal X}}
\newcommand\CY{{\mathcal Y}}
\newcommand\CZ{{\mathcal Z}}

\newcommand\bB{{\mathbb B}}
\newcommand\bC{{\mathbb C}}
\newcommand\bF{{\mathbb F}}
\newcommand\bH{{\mathbb H}}
\newcommand\bN{{\mathbb N}}
\newcommand\bR{{\mathbb R}}
\newcommand\bW{{\mathbb W}}
\newcommand\bZ{{\mathbb Z}}

\newcommand\ve{{\ ^\vee ~}}
\newcommand\one{{{1\!\!1}}}
\newcommand\zero{{{0\!\!0}}}
\newcommand\ovr{\overline}

\newcommand\la{{\lambda}}
\newcommand\ep{{\epsilon}}
\newcommand\sig{{\sigma}}
\newcommand\ome{{\omega}}

\newcommand\fc{{\mathfrak c}}
\newcommand\fg{{\mathfrak g}}
\newcommand\fh{{\mathfrak h}}
\newcommand\fl{{\mathfrak l}}
\newcommand\fm{{\mathfrak m}}
\newcommand\fn{{\mathfrak n}}
\newcommand\fo{{\mathfrak o}}
\newcommand\fp{{\mathfrak p}}
\newcommand\fq{{\mathfrak q}}
\newcommand\fs{{\mathfrak s}}
\newcommand\ft{{\mathfrak t}}
\newcommand\fu{{\mathfrak u}}
\newcommand\fz{{\mathfrak z}}

\newcommand\fC{{\mathfrak C}}
\newcommand\fO{{\mathfrak O}}

\newcommand\gc{{\fg_c}}
\newcommand\kc{{\fk_c}}
\newcommand\lc{{\fl_c}}
\newcommand\qc{{\fq_c}}
\newcommand\sce{{\fs_c}}
\newcommand\tc{{\ft_c}}
\newcommand\uc{{\fu_c}}

\newcommand\tie{{\tilde e}}
\newcommand\wti{\widetilde }

\newcommand\wO{\ovr{\wti{\CO}}}
\newcommand\wmuO{{\widehat{\mu_\CO}}}
\newcommand\COm{{\CO_\fm}}
\newcommand\tCOm{{\widetilde{\COm}}}
\newcommand\CSA{Cartan subalgebra ~}
\newcommand\CSG{Cartan subgroup ~}

\newcommand\phih{{\phi^{\fh}_f}}
\newcommand\ie{{\it i.e. ~}}
\newcommand\eg{{\it e.g. ~}}

\newtheorem{theorem}{Theorem}[subsection]
\newtheorem{corollary}[theorem]{Corollary}
\newtheorem{lemma}[theorem]{Lemma}
\newtheorem{proposition}[theorem]{Proposition}
\newtheorem{conjecture}[theorem]{Conjecture}
\newtheorem{definition}[theorem]{Definition}

\newcommand\Ad{{\operatorname{Ad}}}
\newcommand\ad{{\operatorname{ad}}}
\newcommand\tr{{\operatorname{tr}}}

\section{Notation and preliminary results}\label{5}

\vh
All of the techniques and most of the results in this section are well
known implicitly or explicitly, \cite{McG}, \cite{G}, \cite{KLT} and
references therein.
 
The structure sheaf of a variety $Z$ will be denoted by $\CS_Z.$ We
will abbreviate $R(Z)$ for $\Gamma(Z,\CS_Z).$ 

\vh
Typically $\CO$ will denote the orbit of a nilpotent element $e$ in a
semisimple Lie algebra $\fg.$  The orbit is isomorphic to $G/G(e).$
Its universal cover $\wti\CO$ is isomorphic to $G/G(e)_0.$ By one of
Chevalley's theorems there is a representation  $\tilde V$ and a
vector $\tilde e=(e,\tilde v)\in \fg\oplus \tilde V$ such that its
orbit under $G$ is the universal cover (in other words the stabilizer
of $\tilde v$ is $G(e)_0$.  Given any subgroup $G(e)_0\subset
H\subset G(e),$ there is a corresponding cover $\wti\CO_H$ which can be
realized in the same way as the orbit of an element
$e_H=(e,v_H)\in\fg\oplus V_H.$  

\vh
Let $\{e,h,f\}$ be a Lie triple associated to $e.$ Let $\fg_{\ge 2}$
be the sum of the eigenvectors of $\ad h$ with eigenvalue greater than
or equal to $2.$ Let $P_e$ be the parabolic subgroup determined by
$h,$ \ie the parabolic subgroup corresponding to the roots with
eigenvalue greater than or equal to zero for $\ad h.$ It is well known
that the natural map
\begin{equation}\label{5.1}
m_e : G\times_{P_e} \fg_{\ge 2} \longrightarrow \ovr \CO,\qquad
(g,X)\mapsto gXg^{-1}
\end{equation}
is birational and projective. The birationality follows from
\cite{BV}. The projective property is in \cite{McG}. Indeed let $P$ be any
parabolic subgroup, $\CP:=G/P$  and $\Sigma\subset \fp$ be any closed
$P$-invariant subspace. Then we can embed  $G\times_P \Sigma$ in $\CP\times\fg$ via
$(g,X)\mapsto (gPg^{-1},gXg^{-1}).$ The image is $\{(gPg^{-1},X)\ :\
g^{-1}Xg\in\Sigma\}.$ It is closed because $G/P$
is complete, and this is the $G$-orbit of $\{(P,X)\ :\ X\in\Sigma\}.$ 
Then the map $m(g,X):=gXg^{-1}$ is the composition of this embedding
with the projection on the second factor. 

\vh
Let $P=MN$ be an arbitrary parabolic subgroup and $\CO_\fm\subset \fm$
be a nilpotent orbit. A $G$-orbit $\CO$ is called {\it induced }
from $\CO_\fm$ (\cite{LS}), if
\begin{equation}\label{5.2}
\CO\cap [\CO_\fm + \fn]\quad \text{ is dense in }\quad \CO_\fm +\fn.
\end{equation}
Let $\Sigma:=\CO_\fm +\fn.$ There is a similar {\it moment map}
\begin{equation}\label{5.3}
m:G\times_P \Sigma \longrightarrow \CO,\qquad (g,X)\mapsto gXg^{-1},
\end{equation}
It is projective for the same reason as before, but it is not always
birational. Precisely, if $e\in\Sigma\cap\CO,$ then
the generic fiber of $m$ is isomorphic to $G(e)/P(e).$ 

We will write $\CZ$ for $G\times_P\Sigma$ where $\Sigma=\CO_{\fm}+\fn.$ 
In general, write $A_G(e):=G(e)/G(e)_0$ (we suppress the subscript $G$ if it is
clear from the context). Recall from \cite{LS} that $G(e)_0=P(e)_0,$
so that there is an inclusion  $A_P(e)\subset A_G(e).$ If $e_\fm\in
\CO_\fm$ then there is a surjection $A_P(e)\to A_M(e_\fm)$. Given a
character $\phi$ of $A_M(e_\fm),$ we will denote by the same letter
its inflation to $A_P(e).$ 

A related result is the following. Let $e_\fm\in\CO_\fm$ and
$\la\in\fm$ be such that $C(\la)=\fm.$ Let $e=e_\fm+\fn$ be a
representative for the induced nilpotent. Let $\psi$ be a character of
$A(\CO_\fm)$ and $\Psi$ be the induced representation to $A(\CO)$.  We
regard them both as characters of the centralizers of the
nilpotents. Choose a (noninvariant) inner product on $\fg.$
\begin{proposition}\label{5.5} Let $(\mu,V)$ be a representation of $G.$ Then
$$
[\mu:Ind_M^G[R(\CO_\fm)_\psi]]\le [\mu: R(\CO)_\Psi]
$$
where $R(\CO)_\Psi:=\sum_{\rho\in A(\CO)} [\rho:\Psi]R(\CO)_\rho.$
\end{proposition}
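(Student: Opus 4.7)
The plan is to construct an explicit $G$-equivariant injection
$$
Ind_M^G R(\CO_\fm)_\psi \;\hookrightarrow\; R(\CO)_\Psi,
$$
from which the multiplicity inequality is immediate. The map factors through regular functions on an appropriate cover $\wti\CZ$ of $\CZ = G\times_P\Sigma$: the first leg uses the geometry of the projection $\Sigma\to\CO_\fm$, and the second uses restriction to the open orbit together with finite-group Frobenius reciprocity.

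For the first leg, I would build $\wti\CZ$ as follows. The Levi decomposition $\fp = \fm\oplus\fn$ gives a $P$-equivariant projection $p:\Sigma\to\CO_\fm$, $x+n\mapsto x$: it is obviously $M$-equivariant, and $N$-invariant because $[\fn,\fm]\subset\fn$ forces $\Ad(n)x - x\in\fn$ for $x\in\CO_\fm$ and $n\in N$. Let $\wti\CO_\fm\to\CO_\fm$ be the cover of $\CO_\fm$ corresponding to $\ker\psi\subset A_M(e_\fm)$; pull it back via $p$ to $\wti\Sigma = \wti\CO_\fm\times\fn$, and set $\wti\CZ := G\times_P\wti\Sigma$. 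From $R(G\times_P Y) = Ind_P^G R(Y)$ and $R(\wti\Sigma) = R(\wti\CO_\fm)\otimes R(\fn)$, extracting $\psi$-isotypics for the deck action yields $R(\wti\CZ)_\psi = Ind_P^G(R(\CO_\fm)_\psi\otimes R(\fn))$. The $P$-equivariant inclusion $f\mapsto f\otimes 1$, together with the triviality of the $N$-action on $R(\CO_\fm)_\psi$, then produces the first embedding
$$
Ind_M^G R(\CO_\fm)_\psi \;=\; Ind_P^G R(\CO_\fm)_\psi \;\hookrightarrow\; R(\wti\CZ)_\psi.
$$

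For the second leg, I would restrict to the open orbit. The generic fiber of $m$ at $e\in\CO$ is $G(e)/P(e)$, a \emph{finite} set since $P(e)_0 = G(e)_0$; thus $m^{-1}(\CO) = G/P(e)$ is the étale cover of $\CO = G/G(e)$ corresponding to $A_P(e)\subset A_G(e)$. Lifted to $\wti\CZ$, the open piece $\wti\CZ|_{m^{-1}(\CO)}$ becomes $G/P(e)_\psi$, where $P(e)_\psi\subset P(e)$ is the preimage of $\ker\psi$ under the surjection $P(e)\twoheadrightarrow A_M(e_\fm)$; letting $H_\psi := P(e)_\psi/G(e)_0$, this is the cover of $\CO$ attached to $H_\psi\subset A_P(e)\subset A_G(e)$. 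Decomposing $R(G/P(e)_\psi) = \bigoplus_\rho R(\CO)_\rho\otimes\rho^{H_\psi}$ under $A_G(e)$ and extracting the $\psi$-isotypic under $A_P(e)/H_\psi$, the finite-group Frobenius identity $[\rho|_{A_P(e)}:\psi] = [\rho:\Psi]$ identifies $R(\wti\CZ|_{m^{-1}(\CO)})_\psi \cong R(\CO)_\Psi$ as $G$-modules. Since $\wti\CZ|_{m^{-1}(\CO)}$ is dense in the reduced irreducible variety $\wti\CZ$, restriction of regular functions is injective and gives the second embedding $R(\wti\CZ)_\psi\hookrightarrow R(\CO)_\Psi$.

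The main obstacle is the geometric identification in the second leg: one must carefully compute the stabilizer in $G$ of a chosen lift of $e$ in $\wti\Sigma$, verify it is exactly $P(e)_\psi$, and handle the possibility that $\wti\CZ|_{m^{-1}(\CO)}$ splits as a disjoint union of several copies of $G/P(e)_\psi$ (which would only strengthen the inequality). The remainder of the argument is essentially formal once the projection $p$ and the cover $\wti\CZ$ are in place; the role of $\la$ with $C(\la)=\fm$ is to supply a central one-parameter subgroup of $M$ that makes the decomposition $\fp = \fm\oplus\fn$ (and hence $p$) canonical and grades the computation.
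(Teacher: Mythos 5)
Your second leg is sound and is essentially the paper's own mechanism for the Richardson case (restrict to the open $G$-orbit $G/P(e)_\psi$ in a cover of $\CZ$, decompose under $A_G(e)$, and use $[\rho|_{A_P(e)}:\psi]=[\rho:\Psi]$ by finite-group Frobenius reciprocity, then use density to make restriction injective). The problem is the first leg. The map $f\mapsto f\otimes 1$ embeds $(R(G)\otimes R(\CO_\fm)_\psi)^P=H^0(G/P,\CS_{R(\CO_\fm)_\psi})$ into $R(\wti\CZ)_\psi$, and for this algebraically induced module the multiplicity of $\mu$ is $\dim Hom_P(\mu, R(\CO_\fm)_\psi)=\dim Hom_M(\mu/\fn\mu,\,R(\CO_\fm)_\psi)$. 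But the module $Ind_M^G[R(\CO_\fm)_\psi]$ in the Proposition is the one whose multiplicities are $\dim Hom_M(\mu|_M, R(\CO_\fm)_\psi)$ (this is what the paper's proof computes: the dimension of the $\fz(\la_n)$-fixed vectors, with $\fz(\la_n)=\fz_\fm(e_\fm)$, i.e.\ restriction of $\mu$ to all of $M$, not to its $N$-coinvariants). These are genuinely different. Already for $G=SL(2)$, $P=B$, $\CO_\fm=\{0\}$, $\psi=triv$: your submodule is $R(G/B)=\bC$, while $Ind_T^G[triv]$ contains every $V_\mu$ with multiplicity $\dim (V_\mu)_0$, and the Proposition's inequality against $R(\CO)_\Psi=R(\CN)$ is an equality (Kostant). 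So your chain proves only the much weaker bound $[\mu:H^0(G/P,\CS_{R(\CO_\fm)_\psi})]\le[\mu:R(\CO)_\Psi]$.

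The identity $Ind_M^G=Ind_P^G$ for a module with trivial $N$-action is valid for unitary/smooth induction of group representations, but not for the functor $(R(G)\otimes\cdot)^P$; to recover the full $Ind_M^G$ inside $R(\wti\CZ)_\psi$ you must use the factor $R(\fn)$ in an essential way, i.e.\ prove $\dim Hom_P(\mu, R(\CO_\fm)_\psi\otimes R(\fn))\ge\dim Hom_M(\mu|_M,R(\CO_\fm)_\psi)$ --- which is a Kostant-type statement and is, in effect, the actual content of the Proposition rather than a formal step. The paper avoids this entirely by a different, non-geometric argument: it deforms $e_\fm$ to $\la_n=\tfrac1n\la+e$, notes $\fz(\la_n)\cong\fz_\fm(e_\fm)$, chooses orthonormal bases of $\fz(\la_n)$ and of the $\fz(\la_n)$-fixed vectors in $V$, and passes to a convergent subsequence as $n\to\infty$ to produce that many linearly independent $\fz(e)$-fixed vectors; Frobenius reciprocity then gives the inequality on the level of universal covers, with a minor modification for the component groups. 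You would either need to import such a semicontinuity argument, or supply the missing Kostant-type comparison, before your first embedding exists.
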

\begin{proof} For $n\in\bN,$ consider $\frac{1}{n}\la +e_\fm.$   
There is $p_n\in P$ such that $\la_n:=Ad(p_n)(\frac{1}{n}\la
+e_\fm)=\frac{1}{n}\la +e.$ For each $n,$ let $X_n^1,\dots, X_n^k$ be
an orthonormal basis of $\fz(\la_n),$ the centralizer in $\fg$ of
$\la_n.$ We can extract a  subsequence such that the $X_n^i$ all
converge to an orthonormal basis of $\fz(e).$ 
Now let $v_1^n,\dots , v_l^n$ be an orthonormal basis of the space of
fixed vectors of $\fz(\la_n)$ in $V.$ We can again extract a subsequence
such that the $v_n^j$ all converge to an orthonormal set of vectors in
$V.$ Because $v_n^j$ are invariant under the action of the $X_n^i,$
their limits are invariant under an orthonormal basis of $\fz(e).$ Using
Frobenius reciprocity, this
proves the claim for the connected components of the centralizers, \ie
the corresponding statement for $R(\wti\CO_\fm)$ and $R(\wti\CO).$ The claim
of the proposition follows by a minor modification of the argument.
\end{proof}

For the case of a Richardson nilpotent orbit, we can prove this type
of result in a more geometric fashion.
Let $P=MN$ be a parabolic subgroup with Lie algebra $\fk p=\fk m +\fk
n.$ Denote again by $\la\in\fg$  a semisimple  element whose centralizer is
$\fk m,$ and which is positive on the roots of $\fk n.$ Let $e\in \fk
n$ be a representative of the Richardson induced orbit from this
parabolic subalgebra, and denote its $G$ orbit by $\CO.$ As before,
there is a map
\begin{equation}
  \label{eq:1.1}
m:  G\times_P\fk n\longrightarrow \fg,
\end{equation}
with image $\ovl{\CO}.$ Let $\CO'$ be the inverse image of $\CO.$
Identify representations of $A_P(e)$ and $A_G(e)$ with representations
of $G(e)$ by making them trivial on $G(e)_0.$ 
\begin{proposition}
$$
[\mu: Ind_P^G[triv] ] = \sum_{\rho\in\widehat{ A_G(e)}}
[\rho|_{A_P(e)}:triv][\mu:R(\CO)_\rho]. 
$$
\end{proposition}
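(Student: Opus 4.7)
My plan is to identify both sides of the proposition with $[\mu:R(\CO')]$. First I show that $\CO'$ is the homogeneous cover $\wti\CO_{P(e)}\cong G/P(e)$: the $G$-stabilizer of $[1,e]\in\CO'$ is $P\cap G(e) = P(e)$, so its orbit is $G/P(e)$, of dimension $\dim\CO$; since $m|_{\CO'}:\CO'\to\CO$ is finite of degree $|G(e)/P(e)|$, counting points in the fibers over $e$ forces $\CO'$ to be a single $G$-orbit. Because $G(e)_0 = P(e)_0$, this is the intermediate cover of $\CO$ associated to $A_P(e)\subset A_G(e)$. Decomposing the universal cover as a $(G\times A_G(e))$-module, $R(\wti\CO) = \bigoplus_{\rho\in\widehat{A_G(e)}} \rho\boxtimes R(\CO)_\rho$, and taking $A_P(e)$-invariants on the right factor gives
$$R(\CO') = R(\wti\CO)^{A_P(e)} = \bigoplus_\rho [\rho|_{A_P(e)}:triv]\,R(\CO)_\rho,$$
so $[\mu:R(\CO')]$ equals the right-hand side of the proposition.

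What remains is to identify $[\mu:Ind_P^G[triv]] = [\mu:R(G\times_P\fn)]$ with $[\mu:R(\CO')]$. Let $\CO''$ denote the normalization of $\ovr\CO$ in the function field $k(G\times_P\fn)$; it is a finite normal cover of $\ovr\CO$, and since $G\times_P\fn$ is smooth (hence normal) the moment map $m$ factors through a proper birational morphism $m':G\times_P\fn\to\CO''$. Zariski's main theorem applied to $m'$ yields $m'_*\CS_{G\times_P\fn} = \CS_{\CO''}$, so $R(G\times_P\fn) = R(\CO'')$. The open subset of $\CO''$ lying over $\CO$ is a connected étale cover of $\CO$ of degree $|G(e)/P(e)|$, and thus coincides with $\CO'$ by the first paragraph; its complement in $\CO''$ maps finitely onto $\ovr\CO\setminus\CO$, which has codimension $\geq 2$ in $\ovr\CO$, so Hartogs extension gives $R(\CO'') = R(\CO')$. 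Combining the two identifications yields the claimed equality.

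The main obstacle is the second paragraph, specifically verifying that $m'$ is well-defined, proper and birational, and that the open subset of $\CO''$ over $\CO$ really is the cover $\wti\CO_{P(e)}$ rather than some other finite étale cover. Once those geometric facts are in place, the rest is bookkeeping with Frobenius reciprocity for the finite groups $A_P(e)\subset A_G(e)$ acting on $\wti\CO$ on the right. Note that the inequality direction of this comparison is already supplied by Proposition~\ref{5.5} (applied with $\CO_\fm = \{0\}$ and $\psi = triv$); the content of the Richardson case is that the inequality is actually an equality, and this is precisely what the normalization plus Zariski argument provides.
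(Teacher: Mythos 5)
Your argument is correct and follows essentially the same route as the paper: both sides are matched with $R(\CO')$, the right-hand side via the fiber $A_G(e)/A_P(e)$ of $m$ over $\CO$, and the left-hand side via the normalization of the closure of the $A_P(e)$-cover together with the codimension-two/normality argument and the proper birational map from $\CZ$ (your appeal to Zariski's main theorem is a cleaner phrasing of the paper's ``finite morphism'' step). The one identification you assert without justification, $[\mu:Ind_P^G[triv]]=[\mu:R(G\times_P\fn)]$, is precisely the input the paper also leaves to \cite{McG}, so this is not a point of divergence.
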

\begin{proof} 
When restricted to $\CO',$ the fiber of $m$ is $A_G(e)/A_P(e),$ and 
\begin{equation}
  \label{eq:1.2}
  R(\CO')\cong \sum [\rho|_{A_P(e)}:triv]R(\CO)_\rho.
\end{equation}
Let $\wti{\CO}$ be the cover corresponding to $A_P(e),$ and let $\C X$
be the normalization of $\ovr{\wti{\CO}}.$ Let $\wO_{reg}$ and $\C
X_{reg}$ be the regular points of the repective varieties. 

Then we have a diagram
\begin{equation}\label{5.6}
\begin{CD}
\C X_{\wti{\CO}} @>>> \C X_{reg} @>>>    \C X    \\
@VVV       @VVV            @V{\theta}VV     \\ 
\wti{\CO}   @>>> \wti{\CO_{reg}} @>>>  \ovr{\wti{\CO}}
\end{CD} 
\end{equation}
where $\CX_{\wti{\CO}}$ is the inverse image of $\wti{\CO}$ in $\C X.$
The codimensions of the complements of these sets is always greater
than or equal to 2, and the restriction of $\theta$ to $\C X_{reg}$ is an
isomorphism. This is because the morphism is \textit{finite}. 
Because $\C X$ is normal, we conclude that
\begin{equation}\label{5.7}
R(\wti{\CO})=R(\C X).
\end{equation}

Because $\CZ$ is smooth, it is also normal so there is a birational map
\begin{equation}\label{5.8}
\begin{CD}
\CZ @>\pi>> \C X 
\end{CD}
\end{equation}
This is also a \textit{finite} morphism. The rest of the proof is as in \cite{McG}.
\end{proof}
We will use this proposition in the setting of a triangular nilpotent
orbit, and the case (in the classical Lie algebras) where $A_P(e)=\{1\}.$

\bigskip
We return to the case where $P$ corresponds to the middle element
of the Lie triple. In this case, $A(\CO)\subset P.$
Let $\chi\in\widehat{A(\CO)}$ be a (1-dimensional) character viewed as a
representation of $G(e)$ trivial on $G(e)_0,$ and $\xi$ be a
representation of $P$ such that $\xi |_{G(e)}=\chi.$ Then 
\begin{equation}\label{5.9}
H^0(G/P,R(P\cdot f)\otimes \bC_\xi)\subset R(\CO,\CS_\chi)
\end{equation}
because $\CO$ embeds in $\CZ$ via $g\cdot e \mapsto [g,e].$ The results
in \cite{McG} imply that there is equality. Indeed, if $\phi\in
R(\CO,\CS_\chi),$ view it as a map $\phi:G\longrightarrow \bC$
satisfying
\begin{equation}
  \label{eq:5.10}
  \phi(gx)=\chi(x^{-1})\phi(g).
\end{equation}
 Then define a section $s_\phi\in H^0(G/P,R(P\cdot f)$ by the formula
 \begin{equation}
   \label{eq:5.11}
s_{\phi,\xi}(g)(p\cdot f):=\xi(p)\phi(gp).
 \end{equation}
The inverse map is given by 
\begin{equation}
  \label{eq:5.12}
s\mapsto  \phi_s(g):=s(g)(f).
\end{equation}
We note that there is another inclusion
\begin{equation}
  \label{eq:5.13}
  H^0(G/P,R(\fg_{\ge 2}\otimes \bC_\al)\subset H^0(G/P,R(P\cdot
  f)\otimes \bC_\xi).
\end{equation}
In \cite{McG} it is shown that when $\chi=triv$ and $\xi=triv,$ then
equality holds in (\ref{eq:5.13}), and in addition
\begin{equation}
  \label{eq:5.14}
  H^i(/P,R(g_{\ge 2})=(0) \quad\text{ for } i>0.
\end{equation}

We make the following conjecture
\begin{conjecture}\label{5.4} For each $\chi\in \widehat{A(\CO)}$ there
is a representation $\xi$ of $P_e$ satisfying $\xi\mid_{G^e}=\chi$ such that
\begin{equation*}
H^i(G/P_e,R(\fg_{\ge 2})\otimes \CS_\xi)=
\begin{cases}
R(\CO)_\chi, &\text{ if } i=0,\\
0 &\text{ otherwise. }
\end{cases}
\end{equation*}
\end{conjecture}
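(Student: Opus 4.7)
The plan is to extend the argument of \cite{McG} (which handles $\chi=\text{triv}$) to the twisted setting, using the freedom to choose $\xi$ subject only to $\xi|_{G^e}=\chi$. The proof splits naturally into the degree-zero identification and the higher cohomology vanishing; only the second step requires genuinely new input.

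For the degree-zero claim, the inclusion $H^{0}(G/P_e,R(\fg_{\ge 2})\otimes\CS_\xi)\subset R(\CO)_\chi$ is immediate from (\ref{eq:5.13}) combined with the identification at (\ref{5.9}). To get equality, given $\phi\in R(\CO)_\chi$ I would form the section $s_{\phi,\xi}$ defined in (\ref{eq:5.11}), which is a priori regular only on the open piece of $\CZ=G\times_{P_e}\fg_{\ge 2}$ mapping isomorphically to $\CO$, and show that it extends across all of $\CZ$. Following the idea of the commented passage, choose $n$ with $\chi^{n}=1$ and choose $\xi$ so that the character $n\xi$ of $P_e$ is expressible as a sum of roots of $\fn$ with nonpositive coefficients. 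Then $s_{\phi,\xi}^{\,n}=s_{\phi^{n},0}\cdot s_{1,n\xi}$, where the first factor extends by the untwisted case of \cite{McG} applied to $\phi^{n}\in R(\CO)$ and the second extends by the choice of $\xi$. Since $\CZ$ is smooth and hence normal, extension of an $n$-th power forces extension of the function itself, completing this step.

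For the higher cohomology vanishing, let $\pi:\CZ\to G/P_e$ denote the bundle projection. Because $\fg_{\ge 2}$ is affine, $R^{i}\pi_{*}=0$ for $i>0$, and so
$$
H^{i}(G/P_e,R(\fg_{\ge 2})\otimes\CS_\xi)\cong H^{i}(\CZ,\CS_\CZ\otimes\pi^{*}\CS_\xi).
$$
The natural next step is to push forward by the birational projective map $m_e:\CZ\to\overline{\CO}$. When $\xi=\text{triv}$, the vanishing $R^{i}(m_e)_{*}\CS_{\CZ}=0$ together with $(m_e)_{*}\CS_{\CZ}=\CS_{\overline{\CO}}$ and the affineness of $\overline{\CO}$ yields (\ref{eq:5.14}); the goal here is to produce an analogous identification and vanishing for the twisted pushforward.

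The main obstacle will be precisely this twisted vanishing: $\pi^{*}\CS_\xi$ need not be semi-ample on $\CZ$, so no off-the-shelf Grauert--Riemenschneider argument applies directly. My preferred route is to pass to the normalized closure $\C X$ of the cover $\wti\CO_H$ with $H=\ker\chi$, as in diagram (\ref{5.6}), and exploit equivariance under the finite abelian group $A(\CO)/H$, which is cyclic of order $n$. If one can establish the untwisted version of the conjecture for a resolution of $\C X$, then taking $\chi$-isotypic components under the $A(\CO)/H$-action on cohomology recovers the twisted statement on $G/P_e$. The delicate point is choosing $\xi$ so that $\CS_\xi$ matches the line bundle realizing the $\chi$-isotypic sheaf on $\wti\CO_H$; the freedom built into the conjecture makes this plausible but not automatic.
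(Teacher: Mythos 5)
You should be aware at the outset that the statement you are proving is presented in the paper only as a conjecture (Conjecture \ref{5.4}); the paper contains no proof of it, so the only things to compare your argument against are the commented-out sketch of the $H^0$ step and the appeal to \cite{McG} for the untwisted case. Your degree-zero argument reproduces that sketch essentially verbatim: the inclusion via (\ref{eq:5.13}) and (\ref{5.9}), then extension of $s_{\phi,\xi}$ across $\CZ$ by raising to the $n$-th power and invoking normality (integral closedness) of the smooth variety $\CZ.$ This part is sound as far as it goes, but it silently assumes the existence of a character $\xi$ of $P_e$ with $\xi|_{G^e}=\chi$ and with $n\xi$ a nonpositive integral combination of roots of $\fn$; that existence is exactly what the paper defers to the case-by-case constructions of Section \ref{6} for classical groups, and it is not automatic in general. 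So even your $i=0$ claim is conditional.

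The genuine gap is the vanishing of $H^i$ for $i>0$. You correctly identify it as the hard point, but the route you propose does not close it. Passing to the normalization $\C X$ of $\ovr{\wti\CO_H}$ and taking $\chi$-isotypic components under $A(\CO)/\ker\chi$ faces two unaddressed problems. First, the finite map $\theta:\C X\to\ovr\CO$ is not \'etale over the boundary, so an isotypic decomposition of cohomology on (a resolution of) $\C X$ does not obviously compute $H^i(G/P_e,R(\fg_{\ge 2})\otimes\CS_\xi)$, whose underlying geometric object is $\CZ=G\times_{P_e}\fg_{\ge 2}$ --- a resolution of $\ovr\CO$, not of $\C X$; you would need to exhibit an equivariant resolution of $\C X$ compatible with $\CZ$ and match $\CS_\xi$ with the descended isotypic sheaf, which is the crux and is left unargued. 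Second, the untwisted higher vanishing you would need on such a resolution amounts to $\C X$ having rational singularities; Grauert--Riemenschneider gives vanishing for the canonical sheaf, not the structure sheaf, so this input is itself an open ingredient of the same order of difficulty as the conjecture. What you have, then, is a proof of the $i=0$ statement conditional on the existence of a suitable $\xi$, together with an honest but unexecuted plan for $i>0$; the conjecture remains unproven, consistent with its status in the paper.
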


\vh 
A set of $\xi$ is given in the next section in the case of classical groups. 
The cases when $\CO$ is special and $A(\CO)=\ovl{A(\CO)}$ are called
\textit{smoothly cuspidal}.  View the complex
group $G$ as a real Lie group, and let $K$ be the maximal compact
subgroup.  Then $R(\CO)$ can be thought of as a $K$-module using the
identification of $K_c$ with $G.$  We will prove the following theorem
in the next section. 

\begin{theorem}\label{t:5.10}  Assume $\CO$ is smoothly cuspidal, and
  let $\chi\longleftrightarrow L_\chi$ be   the correspondence between
  characters of $\ovl{A(\CO)}$ and   unipotent representations defined
  in \cite{BV2}. Then 
  \begin{equation*}
    L_\chi\mid_{K}\cong R(\CO)_\chi.
  \end{equation*}
\end{theorem}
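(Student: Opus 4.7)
The strategy is to identify $R(\CO)_\chi$ with the $K$-spectrum of a cohomologically induced representation, and then match that representation with the Barbasch--Vogan construction. First I would establish Conjecture \ref{5.4} in the smoothly cuspidal case: choose a one-dimensional character $\xi$ of $P_e$ restricting to $\chi$ on $A(\CO)\subset P_e$ (this is available precisely because smooth cuspidality ensures $A(\CO)=\ovl{A(\CO)}$, so the relevant characters do extend), sitting in the correct chamber relative to $h$, and verify the higher-cohomology vanishing $H^i(G/P_e,R(\fg_{\ge 2})\otimes \CS_\xi)=0$ for $i>0$. In degree $0$ the inclusion (\ref{eq:5.13}) together with the extension argument sketched in the commented paragraph after it already identifies the sections with $R(\CO)_\chi$; the vanishing in higher degrees is the genuinely new input needed.

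Next, view $G=K_c$ so that the $K$-structure of any rational $G$-module is the underlying $G$-module. Using the cohomology vanishing above, an Euler-characteristic-plus-Frobenius-reciprocity calculation converts the geometric expression into an induced representation:
\[
[\mu:R(\CO)_\chi]=\sum_i (-1)^i [\mu: H^i(G/P_e,R(\fg_{\ge 2})\otimes\CS_\xi)]=[\mu:\operatorname{Ind}_{P_e}^{G}(R(\fg_{\ge 2})\otimes \bC_\xi)].
\]
This is the $K$-type expansion of a standard (cohomologically induced) module attached to the parabolic $P_e$ and the character $\xi$. Proposition \ref{5.5} also enters here, providing the comparison inequalities between induced spaces on covers that let us bootstrap from connected centralizers to the full $A(\CO)$.

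Now I would identify this standard module with $L_\chi$. The unipotent representation $L_\chi$ of \cite{BV2} is determined by its infinitesimal character (of the form $h/2$ up to a shift depending on the Levi) together with its associated cycle having multiplicity indexed by $\chi$ on $\ovl{\CO}$. I would compute the infinitesimal character of the standard module from the shift $\xi+\rho$ and show it matches, verify irreducibility using smooth cuspidality (which rules out the reducibility caused by additional special pieces on the boundary), and then invoke the uniqueness statement in \cite{BV2} to conclude the identification.

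The main obstacle is the cohomology-vanishing step $H^i(G/P_e,R(\fg_{\ge 2})\otimes \CS_\xi)=0$ for $i>0$ with a nontrivial $\xi$. For $\xi=\operatorname{triv}$ this is the McGovern result recalled in (\ref{eq:5.14}), but for a nontrivial character one needs a Kostant-style analysis: filter $R(\fg_{\ge 2})$ by $P_e$-stable subspaces, and check that each graded piece twisted by $\xi$ remains dominant in the chamber determined by $h$. This is precisely where the explicit choice of $\xi$ (to be produced in the next section for classical groups) is essential, and where the smooth-cuspidality hypothesis is used in a nontrivial way.
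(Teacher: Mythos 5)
Your plan rests on first proving Conjecture \ref{5.4} — the vanishing $H^i(G/P_e,R(\fg_{\ge 2})\otimes\CS_\xi)=0$ for $i>0$ with a nontrivial twist $\xi$ — and you correctly flag this as ``the genuinely new input needed,'' but you never supply it; you only indicate that a Kostant-style filtration argument ``would'' be needed. That statement is left as an open conjecture in the paper precisely because no such proof is available there, so your argument has a genuine gap at its central step. Even granting the vanishing, your final step is also underdeveloped: the Euler characteristic identifies $R(\CO)_\chi$ with the $K$-structure of a full induced (standard) module, and passing from that to the irreducible $L_\chi$ requires more than ``verify irreducibility using smooth cuspidality''; the standard modules at the unipotent infinitesimal character are generically reducible, and the content of the \cite{BV2} character formulas is exactly how the $L_\chi$ sit inside them.

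The paper's actual proof avoids the geometry of $G/P_e$ entirely and is a bookkeeping argument with character identities. One induces $\CO$ up to a \emph{triangular} orbit $\CO^+$ via a Levi $\fm=\fg(n)\times gl(k_1)\times\dots\times gl(k_r)$; for triangular orbits the statement $X_\nu^+=R(\CO^+)_\nu$ is forced by summing the expansions $X_\nu^+=R(\CO^+)_\nu-Y_\nu^+$ over $\nu$ and comparing with $R(\wti\CO^+)$ using \cite{BV1}. Then for general smoothly cuspidal $\CO$, one starts from the expansion $X_\psi=R(\CO)_{\psi'}-Y_\psi$ of \cite{B3},\cite{V} (with $Y_\psi$ supported on strictly smaller orbits), induces both sides to $\fg^+$, uses the inequality of Proposition \ref{5.5} to compare $Ind[R(\CO)_{\psi'}]$ with $R(\CO^+)_{\Psi'}$, sums over $\psi$, and invokes the linear independence of the $R(\CO^+)_\nu$ as $K$-modules from \cite{V} to conclude $Y_\psi=0$ and $\psi=\psi'$. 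If you want to salvage your route, you would have to actually prove the vanishing theorem for the relevant nontrivial $\xi$; otherwise you should switch to the induction-to-triangular-orbits argument, for which Proposition \ref{5.5} (which you cite only in passing) is the real workhorse.
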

I conjecture that this result extends to the correspondence defined
in the next section for the classical Lie algebras, and that a
correspondence with these properties exists in the exceptional cases as well.

\vh
The purpose of \cite{McG} is to show that $R(\CO)$ can be expressed as
a combination of modules induced from characters on Levi
components. This has the effect one can express $R(\CO)$ as a
combination of restrictions to $K$ of standard modules. Theorem
\ref{t:5.10} sharpens this to say that in fact $R(\CO)_\chi$ equals the 
$K$-structure of an irreducible module in a natural way.

\vh
\section{The complex case}\label{6}

\vh
Given $\chi\in \widehat {A(\CO)},$ denote by $R(\CO)_\chi$ the regular
sections of the sheaf corresponding to $\chi.$  
\begin{conjecture}\label{6.2} Given a nilpotent orbit $\CO,$ there is
an infinitesimal character $\la_\CO$ with the following property.

There is a 1-1 correspondence $\chi\leftrightarrow X_\chi$ 
between characters of the component group and irreducible $(\fg,K)$
modules with WF-set $\ovr{\CO}$ and infinitesimal character $\la_\CO$
with the following properties:
\begin{enumerate}
\item The analogous character formulas as in \cite{BV2} hold,
\item $X_\chi |_K\cong R( \CO)_\chi,$
\item the $X_\chi$ are unitary.
\end{enumerate}
\end{conjecture}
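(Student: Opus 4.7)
The plan is to propagate the smoothly cuspidal result (Theorem~\ref{t:5.10}) along chains of Lusztig--Spaltenstein induction, using Proposition~\ref{5.5} and Conjecture~\ref{5.4} as the main engines for matching $K$-types. For any orbit $\CO$, choose a parabolic $P=MN$ and a smoothly cuspidal orbit $\CO_\fm\subset\fm$ such that $\CO$ is induced from $\CO_\fm$; in classical types such chains exist by the structure theory of rigid orbits, and in exceptional types the inductive lists are finite and can be checked explicitly. Define $\la_\CO$ to be the image of the unipotent infinitesimal character $\la_{\CO_\fm}$ of \cite{BV2} under the natural embedding of Harish--Chandra parameters, and verify that $\la_\CO$ is independent of the inducing datum.

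To construct the candidate modules, given $\chi\in\widehat{A(\CO)}$, choose an inducing datum so that the restriction of $\chi$ to the image of $A_P(e)\hookrightarrow A_G(e)$ comes from a character $\chi_\fm$ of $A_M(e_\fm)$ via the surjection $A_P(e)\twoheadrightarrow A_M(e_\fm)$. Form
\[
Y_\chi := \mathrm{Ind}_{P}^{G}\bigl( L_{\chi_\fm}\otimes\one_N \bigr),
\]
where $L_{\chi_\fm}$ is the unipotent module supplied by Theorem~\ref{t:5.10}. Its composition factors with wavefront set $\ovr{\CO}$ should be indexed by characters of $A_G(e)$ restricting to $\chi_\fm$; declare $X_\chi$ to be the one corresponding to $\chi$. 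The character formulas of \cite{BV2} behave well under parabolic induction, so property (1) reduces to the smoothly cuspidal case.

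For property (2), Proposition~\ref{5.5} gives the $K$-type inequality
\[
[\mu : \mathrm{Ind}_M^G R(\CO_\fm)_{\chi_\fm}] \le [\mu : R(\CO)_\Psi],
\]
where $\Psi$ is induced from $\chi_\fm$. Since Theorem~\ref{t:5.10} gives $L_{\chi_\fm}\mid_{K_M}\cong R(\CO_\fm)_{\chi_\fm}$, the left side equals $[\mu : Y_\chi\mid_K]$. Summing over the $\chi$ lying above $\chi_\fm$ and using Conjecture~\ref{5.4} to present each $R(\CO)_\chi$ as $H^0(G/P_e, R(\fg_{\ge 2})\otimes \CS_\xi)$ should force both inequalities to be equalities simultaneously; the cohomological description of $R(\CO)_\chi$ is essential here because it makes the $A(\CO)$-action visible geometrically and lets one disentangle the isotypic decomposition on both sides.

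Property (3), unitarity, is the main obstacle; it is essentially the unitarity conjecture for unipotent representations. Granting (2), the invariant Hermitian form on $X_\chi$ transports to a Hermitian pairing on $R(\CO)_\chi$, and one must show this pairing is positive definite. For classical $G$ one can attempt an iterated theta lift from a small dual pair, where unitarity is preserved at each step; for exceptional $G$ one falls back on a case-by-case analysis, or on deformation through complementary series emanating from the boundary of the associated Dixmier algebra. A uniform proof of positivity for all $\chi$ appears to require genuinely new input beyond the geometric machinery of Section~\ref{5}, and this is the step where I would expect to invest the most effort.
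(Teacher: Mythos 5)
This statement is a conjecture, and the paper does not prove it in general: it proves only Theorem~\ref{6.2.1}, the special case of classical groups and stably trivial orbits ($A(\CO)=\ovr{A(\CO)}$), with the infinitesimal character $\la_\CO$ specified case by case in types $A$--$D$ and unitarity imported from \cite{B1}. Your outline aims at the general statement, and two of its load-bearing steps are not available. First, you invoke Conjecture~\ref{5.4} to settle property (2); that is itself an unproven conjecture of the paper, so the argument is circular at exactly the point where the $K$-type identification has to be forced. Second, the phrase ``should force both inequalities to be equalities simultaneously'' is precisely where the paper's actual mechanism lives, and you have not supplied a substitute for it: the paper sums the identities $X_\psi=R(\CO)_{\psi'}-Y_\psi$ over \emph{all} characters $\psi$, compares with $R(\wti\CO^+)$ via \cite{BV1}, and then uses the linear independence of the $R(\CO)_\nu$ as $K$-modules (\cite{V}) to conclude that the error terms $Y_\psi$ and $Z^+$ vanish and that $\psi=\psi'$. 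Without that linear-independence input, an inequality summed over characters does not yield termwise equality.

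There is also a structural difference worth noting. You induce \emph{up to} $\CO$ from a smoothly cuspidal orbit in a Levi subalgebra; the paper's decisive move goes in the opposite direction: it embeds $\fg(n)$ in a larger algebra $\fg^+$ and induces $\CO$ further up to a \emph{triangular} orbit $\CO^+$, chosen so that $A_P(e)$ is trivial and the Richardson proposition of Section~\ref{5} computes $R(\CO^+)$ exactly via the finite birational map $\CZ\to\CX$; the statement for $\CO$ is then pulled back down. Your chain of inducing data from rigid or cuspidal orbits does not by itself produce a situation where the component-group bookkeeping trivializes, which is what makes the paper's cancellation argument close. Finally, you are right that unitarity is the hard point in general, but in the case the paper actually handles it is not deformed or lifted --- it is quoted from the classification of the unitary dual in \cite{B1}; flagging it as ``requiring genuinely new input'' is accurate for the general conjecture but is not how the proved case is settled.
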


As evidence for this conjecture we state the following theorem which is
the main result of this section. Recall Lusztig's quotient
$\ovr{A(\CO)}$ of the component group.

\begin{theorem}\label{6.2.1} The conjecture is true for classical
groups for nilpotent orbits such that $A(\CO)=\ovr{A(\CO)}.$
\end{theorem}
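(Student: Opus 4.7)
The plan is to proceed by an explicit case analysis in each classical type, leveraging the partition parametrization of nilpotent orbits. First I would pin down, in types $B$, $C$, and $D$, exactly which $\CO$ satisfy $A(\CO) = \ovr{A(\CO)}$, and for each such $\CO$ and each $\chi \in \widehat{A(\CO)}$ construct a parabolic subgroup $P = LN$ containing $P_e$, together with a one-dimensional character $\xi$ of $L$ (inflated to $P$) whose restriction to $G(e)$ equals $\chi$. This is the explicit realization of the data promised by Conjecture \ref{5.4} and alluded to in the remark right after it. The construction should be uniform on the combinatorial data of the partition, extending the Levi-component presentations of $R(\CO)_\chi$ worked out by McGovern in the trivial case $\chi=1$.

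Second, with $(P,\xi)$ in hand, I would apply the Broer-type vanishing theorem (the analogue of the commented-out Theorem 5.10) to the line bundle $\CS_\xi$ on $\CZ = G\times_P \fg_{\ge 2}$. Assuming the vanishing condition holds for $\xi$, one obtains
\begin{equation*}
H^0(\CZ,\CS_\xi)=R(\CO)_\chi,\qquad H^i(\CZ,\CS_\xi)=0\text{ for }i>0,
\end{equation*}
so that as virtual $K$-characters $R(\CO)_\chi = \mathrm{Ind}_P^G(\one_\xi)$. Proposition \ref{5.5} already gives the containment $\mathrm{Ind}_M^G[R(\CO_\fm)_\psi]\le R(\CO)_\Psi$ needed to pin down $\xi$ up to the constraints imposed by the component group, while the vanishing upgrades the inequality to an equality on $K$-multiplicities.

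Third, I would take for $X_\chi$ the irreducible Langlands quotient of the standard module $\mathrm{Ind}_P^G(\one_\xi)$ at the infinitesimal character $\la_\CO$ prescribed by \cite{BV2}. The explicit choice of $\xi$ from step one should make $\la_\CO$ an endpoint of complementary series where the induced module is already irreducible, so that $X_\chi\cong \mathrm{Ind}_P^G(\one_\xi)$ and property $(2)$ of Conjecture \ref{6.2} follows directly from step two. Property $(1)$ — the BV2-style character formulas — then drops out of Frobenius reciprocity applied to the explicit parabolic induction, and the $\mathrm{WF}$-set statement follows from associated variety arguments, recovering Theorem \ref{t:5.10} in the smoothly cuspidal case and extending it. For unitarity (property $(3)$), the character $\xi$ will be purely imaginary in the relevant sense, so unitary parabolic induction applies; alternatively, in the classical setup one can identify $X_\chi$ with a theta lift of a unitary character from a smaller classical group, which forces unitarity.

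The main obstacle I expect is step one combined with the vanishing in step two: producing, for non-special orbits that still satisfy $A(\CO)=\ovr{A(\CO)}$, a character $\xi$ whose weight meets Broer's dominance/vanishing hypothesis while restricting correctly to $G(e)$. For special orbits the existing McGovern construction (plus a twist) should essentially work, but in the non-special case the correct $P$ may be strictly larger than $P_e$ and the combinatorics of matching $\chi$ to a character of $L$ becomes delicate — particularly for very even orbits in type $D$, and for orbits in type $C$ where $\ovr{A(\CO)}$ collapses several characters of $A(\CO)$. Showing irreducibility of the induced standard module at $\la_\CO$ is the second delicate point, likely requiring the unitarity/irreducibility results for spherical unipotent representations from Barbasch–Vogan together with a minimal-$K$-type argument identifying $R(\CO)_\chi$ with the $K$-span generated by $\one_\xi$.
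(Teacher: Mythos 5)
Your strategy is essentially the one the paper sketches and then deliberately abandons (it survives only in the commented-out material around Conjecture \ref{5.4} and the Broer-type vanishing statement): realize each $R(\CO)_\chi$ as $H^0(\CZ,\CS_\xi)$ with higher cohomology vanishing, identify $\mathrm{Ind}_P^G(\one_\xi)$ with $X_\chi$, and read off the $K$-structure. There are two genuine gaps. First, the existence of a character $\xi$ of a parabolic restricting to $\chi$ on $G(e)$ \emph{and} satisfying a Broer-type dominance/vanishing hypothesis is precisely the content of Conjecture \ref{5.4}, which the paper leaves open; you cannot take it as an input to the theorem, and you correctly flag it as your main obstacle without resolving it. Second, and more seriously, it is not true that every unipotent representation attached to a stably trivial orbit is irreducibly induced from a one-dimensional character of a parabolic subgroup: the case-by-case discussion in types B, C, D shows that the column-by-column reduction terminates at cuspidal unipotent representations (induced irreducibly from unipotent parameters on a smaller group of the \emph{same} type), not at characters of Levi subgroups. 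So your step three, $X_\chi\cong \mathrm{Ind}_P^G(\one_\xi)$ irreducible at $\la_\CO$, fails for general $\chi$, and with it your derivation of properties (1)--(3) of Conjecture \ref{6.2}.

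The paper's actual proof goes in the opposite direction: instead of inducing from a smaller Levi to realize $X_\chi$, it induces $\CO$ \emph{up} to a larger group $\fg^+$ in which the induced orbit $\CO^+$ is triangular, hence Richardson from a parabolic with $A_P(e)=\{1\}$. For such orbits the moment-map proposition gives $[\mu:\mathrm{Ind}_P^G[triv]]=\sum_\rho \dim(\rho)[\mu:R(\CO^+)_\rho]$, i.e. the induced module is $R(\wti\CO^+)$, and combined with the character identity from \cite{BV1} and positivity of the error terms this forces $X_\nu^+=R(\CO^+)_\nu$. The general stably trivial case then follows by writing $X_\psi=R(\CO)_{\psi'}-Y_\psi$ with $Y_\psi$ supported on smaller orbits (from \cite{B3} and \cite{V}), inducing to $\fg^+$, using the degeneration inequality of Proposition \ref{5.5} to bound the induced module by $R(\CO^+)_{\Psi'}$, and invoking the linear independence of the $R(\CO^+)_\nu$ as $K$-modules to conclude $Y_\psi=0$ and $\psi=\psi'$. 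No cohomology vanishing is used anywhere. To salvage your route you would first have to prove Conjecture \ref{5.4}, which is a strictly harder open problem than the theorem itself.
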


We call an orbit satisfying $A(\CO)=\overline{A(\CO)}$ {\textit stably trivial.}

\bigskip
We rely on \cite{BV2} and \cite{B1}. First, we prescribe the
infinitesimal character $\la_\CO.$  The main property will be that the
unipotent representations (irreducible $(\fg,K)$ modules whose
annihilator in the universal enveloping algebra is maximal with the
given infinitesimal character) are unitary and in 1-1 correspondence
with the irreducible characters of the component group $A(\CO).$ 
The notation is as in \cite{B1}. An orbit is called cuspidal if it is
not induced from any proper Levi component. For special orbits whose
dual is even, the infinitesimal character is one half the semisimple
element of the Lie triple corressponding to the dual orbit. For the
other orbits we need the case-by-case analysis.

\noindent{\bf Type A.}\quad A nilpotent orbit is determined by its
Jordan canonical form. It is given by a partition \ie a 
sequence of numbers in decreasing order $(n_1,\dots ,n_k)$ that add up
to $n.$  Let $(m_1,\dots ,m_l)$ be the dual partition. Then the
infinitesimal character is 
$$
(\frac{m_1-1}{2},\dots ,-\frac{m_1-1}{2},\dots
,\frac{m_l-1}{2},\dots, -\frac{m_l-1}{2})
$$
The orbit is induced from the trivial orbit on the Levi component
$GL(m_1)\times \dots \times GL(m_l).$ The corresponding
unipotent representation is spherical and induced irreducible from the
trivial representation on the same Levi component. All orbits are
{\it stably trivial}.

\noindent{\bf Type B.}\quad A nilpotent orbit is determined by its
Jordan canonical form (in the standard representation). It is
parametrized by a partition $(n_1,\dots ,n_k)$ of $2n+1$ such that
every even part occurs an even number of times. 
Let $(m'_0,\dots ,m'_{2p'})$ be the dual partition (add an $m'_{2p'}=0$ if
necessary, in order to have an odd number of terms).  
If there are any $m'_{2j}=m'_{2j+1}$ then pair them
together and remove them from the partition. 
Then relabel the remaining columns and pair them up, the rest of the
columns $(m_0)(m_1,m_2)\dots (m_{2p-1}m_{2p}).$ The members of each
pair have the same parity and $m_0$ is odd. Then form a parameter
$$
\aligned
(m_0)&\leftrightarrow (\frac{m_0-2}{2},\dots , 1/2),\\
(m'_{2j}=m'_{2j+1})&\leftrightarrow (\frac{m_{2j}-1}{2},\dots , -\frac{m_{2j}-1}{2})\\
(m_{2i-1}m_{2i})&\leftrightarrow (\frac{m_{2i-1}}{2},\dots , -\frac{m_{2i}-2}{2}) 
\endaligned$$
In case $m'_{2j}=m'_{2j+1},$ the nilpotent orbit is induced from a
parabolic subalgebra $\fp$ with Levi component $so(*)\times gl(m'_{2j})$ with
the trivial nilpotent on the $gl$ factor. The
component groups in $G$ and $P$ are equal. The unipotent representations are
unitarily induced irreducible from similar parameters on the Levi
component.  Similarly if some $m_{2i-1}=m_{2i},$ then the nilpotent is
induced irreducible from a nilpotent on an $so(*)\times
gl(\frac{m_{2i-1}+m_{2i}}{2})$ with the trivial nilpotent on the $gl$
factor. The component groups of the centralizers in $G$ and $P$ coincide. 
The unipotent representations are again induced irreducible from the
nilpotent orbit on $so(*)$ with partition the one for $\CO$ but with
$m_{2i-1},m_{2i}$ removed. The {\it stably trivial} orbits are the
ones such that every odd sized part appears an even number of
times except for the largest size. An orbit is trinagular if it has
partition $(1,1,3,3,\dots,2m-1,2m-1,2m+1).$ It is induced from the
trivial nilpotent orbit on $\fm=gl(2)\times gl(4)\dots gl(2m).$ The
component group $A_P$ is trivial.

\noindent{\bf Type C.}\quad A nilpotent orbit is determined by its
Jordan canonical form (in the standard representation). It is
parametrized by a partition $(n_1,\dots ,n_k)$ of $2n+1$ such that
every odd part occurs an even number of times. 
Let $(m'_0,\dots ,m'_{2p'})$ be the dual partition (add a $m'_{2p'}= 0$ if
necessary in order to have an odd number of terms). 
If there are any $m'_{2j-1}=m'_{2j}$ pair them up and
remove them from the partition. Then relabel and pair
up the remaining columns $(m_0m_1)\dots
(m_{2p-2}m_{2p-1})(m_{2p}).$ The members of each pair have the same
parity. The last one, $m_{2p},$ is always even. Then
form a parameter
$$
\aligned
(m'_{2j-1}=m'_{2j})&\leftrightarrow (\frac{m_{2j}-1}{2},\dots , 
-\frac{m_{2j}-1}{2})\\
(m_{2i}m_{2i+1})&\leftrightarrow (\frac{m_{2i}}{2},\dots , 
-\frac{m_{2i+1}-2}{2}),\\
m_{2p}&\leftrightarrow (\frac{m_{2p}}{2},\dots , 1).
\endaligned$$
The nilpotent orbits and the unipotent representations have the same
properties with respect to these pairs as the corresponding ones in
type B. The {\it stably trivial} orbits are the
ones such that every even sized part appears an even number of
times. An orbit is called triangular if it corresponds to the
partition $(2,2,4,4,\dots ,2m,2m).$ It is induced from the trivial
orbit on $\fm=sp(2m)\times gl(1)\times \dots\times gl(2m-1).$ The
component group $A_P$ is trivial.

\noindent{\bf Type D.}\quad A nilpotent orbit is determined by its
Jordan canonical form (in the standard representation). It is
parametrized by a partition $(n_1,\dots ,n_k)$ of $2n$ such that
every even part occurs an even number of times. 
Let $(m'_0,\dots ,m'_{2p'-1})$ be the dual partition (add a $m'_{2p-1}=0$ if
necessary). If there are any $m'_{2j}=m'_{2j+1}$ pair them up and
remove from the partition. 
Then pair up the remaining columns $(m_0m_{2p})(m_1,m_2)\dots
(m_{2p-2}m_{2p-1}).$ The members of each pair have the same parity and
$m_0, m_{2p-1}$ are even. Then form a parameter
$$
\aligned
(m'_{2j}=m'_{2j+1})&\leftrightarrow (\frac{m'_{2j}-1}{2}\dots , -\frac{m'_{2j}-1}{2})\\
(m_0m_{2p-1})&\leftrightarrow (\frac{m_0-2}{2},\dots ,-\frac{m_{2p-1}}{2}),\\
(m_{2i-1}m_{2i})&\leftrightarrow (\frac{m_{2i-1}}{2}\dots , -\frac{m_{2i}-2}{2}) 
\endaligned$$
The nilpotent orbits and the unipotent representations have the same
properties with respect to these pairs as the corresponding ones in
type B. An exception occurs when the partition is formed of pairs
$(m'_{2j}=m'_{2j+1})$ only. In this case there are two nilpotent
orbits corresponding to the partition. There are also two nonconjugate
Levi components of the form $gl(m'_0)\times gl(m'_2)\times \dots
gl(m'_{2p'-2})$ of parabolic subalgebras. There are two unipotent
representations each induced irreducible from the trivial
representation on the corresponding Levi component. The {\it stably
trivial} orbits are the ones such that every even sized part appears
an even number of times. A nilpotent orbit is triangular if it
corresponds to the partition $(1,1,3,3,\dots,2m-1,2m-1).$ It is
induced from the trivial orbit in the Levi component
$\fm=gl(2)\times\dots\times gl(2m-2).$ The component group $A_P$ is trivial.

\vh
Since all these results are clear for type A, we deal with types B,
C, D only. Consider a {\it stably trivial} nilpotent orbit
$\CO\subset\fg(n).$ Let $\fm=\fg(n)+gl(k_1)\times\dots\times gl(k_r)$
be a Levi component of a parabolic subalgebra in
$\fg^+:=\fg(n+k_1+\dots +k_r).$ There are $k_1,\dots k_r$ such that the orbit
\begin{equation}\label{6.3}
\CO^+ = Ind_{\fm}^{\fg(n')} [\CO\times triv\times \dots \times triv]
\end{equation} 
is {\it triangular.} Let $\fm^+$ be the Levi component corresponding
to the semisimple element of the Lie triple of $\CO^+.$ By
(\ref{5.5}), the unipotent representations attached to  $\CO^+$ 
\[\label{6.3.1}
X_\nu^+=R(\CO^+)_{\nu}- Y_\nu^+.
\]
where $Y_\nu^+$ is a genuine $K$-module.
Adding over $\nu$ and using \cite{BV1}, we find 
\[\label{6.3.2}
R(\wti\CO^+)=R(\wti\CO^+)-Y^+,
\]
so $X_\nu^+=R(\CO^+)_\nu.$ 

\medskip
By \cite{B3} and \cite{V}, for each unipotent representation $X_\psi$ 
there is a representation $\psi'$ of the centralizer of a
representative of the orbit $\CO$, and a $K$-representation $Y_\psi$
such that   
\begin{equation}\label{6.4}
X_\psi=R(\CO)_{\psi'}- Y_\psi.
\end{equation}  
In addition, $Y_\psi$ is supported on strictly
smaller orbits. We need to show that
$Y_\psi=0$ and $\psi=\psi'$. Consider the induced modules
\begin{equation}\label{6.5}
I_\psi^+=Ind_{\fm}^{\fg^+}[X_{\psi}\otimes triv].
\end{equation}
Let $\Psi$ be the character induced from $\psi$ and write $X_\Psi^+$ for
the corresponding combination of $X_\nu^+.$ By \cite{BV}
\begin{equation}\label{6.6}
I_{\psi}^+ =X_\Psi^+
\end{equation}
By (\ref{5.5}), the module induced from
$R(\CO)_{\psi'}$ is contained in $R(\CO^+)_{\Psi'}.$ Thus
\begin{equation}\label{6.7}
I_\psi^+=R(\CO^+)_{\Psi'} - Z_\psi^+,
\end{equation}
where $Z_\psi^+$ is a genuine module (containing the induced from
$Y_\psi$). Again by \cite{BV}, summing both sides over $\psi$ we get
\begin{equation}
\label{6.8}
R(\wti\CO^+)=\sum R(\CO^+)_{\Psi'} - Z^+
\end{equation}
where $Z^+=\sum Z_\psi^+$ is a genuine module supported on smaller
orbits. Unless $R(\wti\CO^+)=\sum R(\CO^+)_{\Psi'}$ and  $Z^+=0,$ this
contradicts the linear independence of the $R(\CO)_\nu$ in \cite{V}. Thus the
$Y_\psi=0$ and the correspondence $\psi\leftrightarrow\psi'$ is 1-1.

Remains to show that $\psi=\psi'.$ Suppose not. Equation  (\ref{6.7})
now reads $I_\psi^+=I_{\psi'}^+.$ 
The claim follows from the linear independence of the $R(\CO^+)_\nu$
as K-modules.

\ifx\undefined\bysame
\newcommand{\bysame}{\leavevmode\hbox to3em{\hrulefill}\,}
\fi

\end{document}